\documentclass[a4paper,reqno]{amsart}

\usepackage{amssymb,amsmath}

\usepackage{paralist,enumitem}
\setlength{\parindent}{0pt}
\setlength{\parskip}{6pt}
\setlength{\normalparindent}{0pt} 
\setlength{\plparsep}{1pt}
\setlength{\plitemsep}{3pt}
\setlist{listparindent=0pt,parsep=3pt}
\setdefaultenum{1.}{}{}{}

\usepackage[pdftex,linktocpage,pdfstartview=FitH,colorlinks=true,allcolors=blue]{hyperref}
\usepackage{amsthm} 
\usepackage[capitalise]{cleveref}

\usepackage[initials,msc-links]{amsrefs}[2007/10/22]

\newtheorem{thm}{Theorem}[section]
\newtheorem{prop}[thm]{Proposition}
\newtheorem{cor}[thm]{Corollary}
\newtheorem{lem}[thm]{Lemma}
\newtheorem*{proposition*}{Proposition}
\newtheorem*{theorem*}{Theorem}
\newtheorem*{cor*}{Corollary}
\theoremstyle{definition}

\theoremstyle{remark}
\newtheorem*{rem}{Remark}
\newtheorem*{rems}{Remarks}
\newtheorem*{commentary}{Commentary}

\numberwithin{equation}{section}


\newcommand{\R}{\mathbb{R}}
\newcommand{\C}{\mathbb{C}}
\newcommand{\Z}{\mathbb{Z}}

\newcommand{\N}{\mathbb{N}}


\DeclareMathOperator{\rank}{rank}

\DeclareMathOperator{\Aut}{Aut}


\newcommand{\trace}{\operatorname{trace}}
\renewcommand{\d}{\mathop{}\!\mathrm{d}}


\DeclareMathOperator{\rSO}{SO}
\DeclareMathOperator{\rSL}{SL}
\DeclareMathOperator{\rSp}{Sp}

\newcommand{\g}{\mathfrak{g}}

\newcommand{\p}{\mathfrak{p}}

\renewcommand{\k}{\mathfrak{k}}

\newcommand{\Ad}{\operatorname{Ad}}
\renewcommand{\a}{\mathfrak{a}}
\newcommand{\ad}{\operatorname{ad}}
\newcommand{\n}{\mathfrak{n}}


\newcommand{\st}{\mathrel{|}}
\newcommand{\set}[1]{\{#1\}}
\newcommand{\ci}[1]{C^{\infty}(#1)}
\newcommand{\ip}[1]{(#1)}

\newcommand{\vh}{\hat{v}}

\newcommand{\s}{\n\oplus\a}
\renewcommand{\sb}{\n^{\beta}\oplus\a^{\beta}}
\newcommand{\NA}{N\!A}
\newcommand{\NAb}{N^{\beta}\!A^{\beta}}

\let\phi\varphi



\newcommand{\TitleWithUrl}[1]{\IfEmptyBibField{doi}%
  {\IfEmptyBibField{url}{\textit{#1}}%
    {\IfEmptyBibField{eprint}{\href {\BibField{url}}{\textit{#1}}}{\textit{#1}}}%
    }%
  {\href {https://doi.org/\BibField{doi}}{\textit{#1}}}}
\renewcommand{\eprint}[1]{\IfEmptyBibField{url}{\url{#1}}%
  {\href {\BibField{url}}{#1}}}

\BibSpec{article}{%
    +{}  {\PrintAuthors}                {author}
    +{,} { \TitleWithUrl}               {title}
    +{.} { }                            {part}
    +{:} { \textit}                     {subtitle}
    +{,} { \PrintContributions}         {contribution}
    +{.} { \PrintPartials}              {partial}
    +{,} { }                            {journal}
    +{}  { \textbf}                     {volume}
    +{}  { \PrintDatePV}                {date}
    +{,} { \issuetext}                  {number}
    +{,} { \eprintpages}                {pages}
    +{,} { }                            {status}
    +{,} { available at \eprint}        {eprint}
    +{}  { \parenthesize}               {language}
    +{}  { \PrintTranslation}           {translation}
    +{;} { \PrintReprint}               {reprint}
    +{.} { }                            {note}
    +{.} {}                             {transition}
    +{}  {\SentenceSpace \PrintReviews} {review}
}

\BibSpec{collection.article}{%
    +{}  {\PrintAuthors}                {author}
    +{,} { \TitleWithUrl}                     {title}
    +{.} { }                            {part}
    +{:} { \textit}                     {subtitle}
    +{,} { \PrintContributions}         {contribution}
    +{,} { \PrintConference}            {conference}
    +{}  {\PrintBook}                   {book}
    +{,} { }                            {booktitle}
    +{,} { \PrintDateB}                 {date}
    +{,} { pp.~}                        {pages}
    +{,} { }                            {status}
    +{,} { available at \eprint}        {eprint}
    +{}  { \parenthesize}               {language}
    +{}  { \PrintTranslation}           {translation}
    +{;} { \PrintReprint}               {reprint}
    +{.} { }                            {note}
    +{.} {}                             {transition}
    +{}  {\SentenceSpace \PrintReviews} {review}
}
\BibSpec{book}{%
    +{}  {\PrintPrimary}                {transition}
    +{,} { \TitleWithUrl}               {title}
    +{.} { }                            {part}
    +{:} { \textit}                     {subtitle}
    +{,} { \PrintEdition}               {edition}
    +{}  { \PrintEditorsB}              {editor}
    +{,} { \PrintTranslatorsC}          {translator}
    +{,} { \PrintContributions}         {contribution}
    +{,} { }                            {series}
    +{,} { \voltext}                    {volume}
    +{,} { }                            {publisher}
    +{,} { }                            {organization}
    +{,} { }                            {address}
    +{,} { \PrintDateB}                 {date}
    +{,} { }                            {status}
    +{}  { \parenthesize}               {language}
    +{}  { \PrintTranslation}           {translation}
    +{;} { \PrintReprint}               {reprint}
    +{.} { }                            {note}
    +{.} {}                             {transition}
    +{}  {\SentenceSpace \PrintReviews} {review}
}

\BibSpec{thesis}{%
    +{}  {\PrintAuthors}                {author}
    +{.} { \PrintDate}                  {date}
    +{.} { \TitleWithUrl}               {title}
    +{:} { \textit}                     {subtitle}
    +{,} { \PrintThesisType}            {type}
    +{,} { }                            {organization}
    +{,} { }                            {address}
    +{,} { \eprint}                     {eprint}
    +{,} { }                            {status}
    +{}  { \parenthesize}               {language}
    +{}  { \PrintTranslation}           {translation}
    +{;} { \PrintReprint}               {reprint}
    +{.} { }                            {note}
    +{.} {}                             {transition}
    +{}  {\SentenceSpace \PrintReviews} {review}
}

\title[Harmonic submersions between symmetric spaces]{Harmonic
  Riemannian submersions\\ between Riemannian symmetric
  spaces\\ of noncompact type}
\author{F.E. Burstall}
\address{Department of Mathematical Sciences\\ University of Bath\\
  Bath BA2 7AY\\UK}
\email{feb@maths.bath.ac.uk}

\subjclass{Primary: 58E20; Secondary: 53C35}

\thanks{It is a pleasure to thank Sigmundur Gudmundsson for
  instructive conversations during the preparation of this
  note.  I am also grateful to David Calderbank and John
  C. Wood for helpful comments on an earlier draft.}

\begin{document}

\begin{abstract}
  We construct harmonic Riemannian submersions that are
retractions from symmetric spaces of noncompact type onto
their rank-one totally geodesic subspaces.  Among the
consequences, we prove the existence of a non-constant,
globally defined complex-valued harmonic morphism from the
Riemannian symmetric space associated to a split real
semisimple Lie group.  This completes an affirmative proof
of a conjecture of Gudmundsson.
\end{abstract}

\maketitle

\section{Introduction}
\label{sec:introduction}

A Riemannian symmetric space of noncompact type is a
homogeneous space $M$ of the form $G/K$ with $G$ a connected
noncompact semisimple Lie group with finite centre and $K$ a
maximal compact subgroup \cite{Hel78}*{Chapter V1, \S1}.

The \emph{rank} of $M$ is the maximal dimension of a flat
totally geodesic submanifold.  It is now classical
\cite{Hel78}*{Chapter IX, \S2} that any
Riemannian symmetric space of noncompact type contains
totally geodesic submanifolds which are Riemannian symmetric
spaces of rank one: indeed, there is such a submanifold
$M_{\beta}$ associated to any simple restricted root $\beta$
(see \S\ref{sec:preliminaries} for definitions).

Our main observation, \cref{th:3}, is that there is a
retraction from $M$ onto $M_{\beta}$ that is a harmonic
Riemannian submersion.  This has a number of interesting
applications since such maps intertwine the
Laplace--Beltrami operators of domain and codomain
\cite{Wat73}.

In particular, our retraction is a harmonic morphism (that
is, pulls back germs of harmonic functions to germs of
harmonic functions).  As a corollary, we are able to
complete the affirmation of a
long-standing\footnote{Gudmundsson first conjectured this
  result in the late nineties (private communication).}
conjecture of Gudmundsson:
\begin{theorem*}[\S\ref{sec:harmonic-morphisms}]
  Let $M$ be a Riemannian symmetric space of noncompact
  type.  Then there is a non-constant, globally defined
  harmonic morphism from $M$ to $\C$.
\end{theorem*}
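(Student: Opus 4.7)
The plan is to reduce, via \cref{th:3}, the construction of a complex-valued harmonic morphism on $M$ to the analogous problem on a rank-one totally geodesic subspace, where such morphisms are classical. Concretely, fix any simple restricted root $\beta$ of $M$ and let $\pi_\beta \colon M \to M_\beta$ be the harmonic Riemannian submersion supplied by \cref{th:3}. Since every Riemannian submersion is horizontally conformal with dilation $1$, the Fuglede--Ishihara characterization of harmonic morphisms (harmonic $+$ horizontally weakly conformal) shows that $\pi_\beta$ is itself a harmonic morphism. Harmonic morphisms compose, so it suffices to exhibit a non-constant, globally defined harmonic morphism $\phi \colon M_\beta \to \C$.

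The space $M_\beta$, being a rank-one Riemannian symmetric space of noncompact type, is (up to homothety) a real, complex, or quaternionic hyperbolic space, or the Cayley hyperbolic plane. In each case Iwasawa decomposition gives $M_\beta \cong NA$ with $A$ one-dimensional and with the Iwasawa nilpotent $N$ having abelianization $N/[N,N]$ of real dimension at least $2$. Composing the resulting projection $M_\beta \to N/[N,N]$ with any $\R$-linear surjection onto $\C$ yields a candidate $\phi$; a direct computation in horospherical coordinates, exploiting the warped-product form of the metric, verifies both $\Delta\phi=0$ and $g^{ij}\partial_i\phi\,\partial_j\phi=0$, so $\phi$ is a harmonic morphism. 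These rank-one constructions are classical, going back to work of Gudmundsson and his collaborators, and can simply be quoted.

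Then $\phi \circ \pi_\beta \colon M \to \C$ is a composition of harmonic morphisms, hence a harmonic morphism; it is non-constant because $\phi$ is non-constant and $\pi_\beta$ is surjective (being a retraction). The substantive work in this strategy lies entirely upstream, in establishing \cref{th:3}; given that retraction, the theorem follows routinely from the well-understood rank-one cases. The only minor pitfall to watch out for is to verify that every $M_\beta$ obtained in \cref{th:3} has $\dim \geq 2$ so that the rank-one construction applies, which is automatic since rank-one symmetric spaces of noncompact type have real dimension at least $2$.
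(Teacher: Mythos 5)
Your overall strategy --- compose the harmonic Riemannian submersion $\pi_\beta\colon M\to M_\beta$ of \cref{th:3} with a non-constant complex-valued harmonic morphism on the rank-one space $M_\beta$ --- is sound and is essentially the paper's strategy. But your proposed rank-one construction contains a genuine gap, and it sits exactly at the crux of the theorem. You claim that for every rank-one $M_\beta$ the Iwasawa nilpotent $N^\beta$ has abelianization of real dimension at least $2$. This is false when $m_\beta=1$ and $m_{2\beta}=0$, i.e.\ when $M_\beta$ is the real hyperbolic plane $H^2$: there $\dim N^\beta=m_\beta+m_{2\beta}=1$, so no $\R$-linear surjection $N^\beta/[N^\beta,N^\beta]\to\C$ exists, let alone a horizontally conformal one. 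Your ``minor pitfall'' check that $\dim M_\beta\geq 2$ tests the wrong quantity; the relevant dimension is $m_\beta=\dim\g_\beta$, which equals $1$ precisely when $\g$ is split --- the case (e.g.\ $G_2^2/\rSO(4)$) that had resisted all previous approaches and that this paper exists to settle. The repair is the paper's \cref{th:4}: for $M_\beta\cong H^2$ one takes $\phi$ to be any non-constant holomorphic function for the identification of $H^2$ with the upper half-plane (these are harmonic morphisms, and necessarily involve the $A$-coordinate as well as the $N$-coordinate, cf.\ \eqref{eq:2}), and then $\phi\circ\pi_\beta$ does the job.

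Two further remarks. First, even when $m_\beta\geq 2$, ``any $\R$-linear surjection onto $\C$'' is not enough: writing $\phi=\ip{X_1,\log_N n}+i\ip{X_2,\log_N n}$, horizontal weak conformality forces $X_1+iX_2\in\g_\beta^\C$ to be isotropic, i.e.\ $\ip{X_1,X_1}=\ip{X_2,X_2}$ and $\ip{X_1,X_2}=0$; this is the Gudmundsson--Svensson construction \cite{MR2525933} and is easily arranged, but it must be said. Second, your treatment of the $m_\beta\geq 2$ case differs mildly from the paper's: the paper quotes Gudmundsson--Svensson's harmonic morphisms directly on $M$ rather than first descending to $M_\beta$, whereas you factor through $\pi_\beta$; since their maps depend only on the $\g_\beta$-component, the two are equivalent, and your version is arguably the more uniform packaging --- once the $H^2$ case is fixed as above.
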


Again, pullback by our retraction preserves the class of
\emph{eigenfunctions in the sense of Gudmundsson--Sobak
  \cite{GudSob20}}: these are complex-valued functions $f$
for which both $f$ and $f^2$ are eigenvectors of the
Laplace--Beltrami operator.  Substantial effort has been
recently made to find such eigenfunctions on Riemannian
symmetric spaces
\cite{MR2395191,MR4632822,MR4626314,GudSob20,MR4382667}.
We exploit the work of Ghandour--Gudmundsson
\cite{MR4632822,MR4626314} on the rank-one case and prove:
\begin{theorem*}[\S\ref{sec:eigenfunctions}]
  Let $M$ be a Riemannian symmetric space of noncompact type
  which is not a product of Cayley hyperbolic planes.  Then
  there exist $f\colon M\to\C$ such that both $f$ and $f^2$
  are eigenvectors of the Laplace--Beltrami operator of $M$.
\end{theorem*}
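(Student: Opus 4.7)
The strategy is to reduce the problem to the rank-one case using the retraction $\pi\colon M\to M_\beta$ from \cref{th:3}. Since $\pi$ is a harmonic Riemannian submersion, Watson's theorem \cite{Wat73} gives
\[
\Delta_M(g\circ\pi)=(\Delta_{M_\beta}g)\circ\pi
\]
for any smooth $g\colon M_\beta\to\C$. Using that $(g\circ\pi)^2=g^2\circ\pi$, we conclude that if both $g$ and $g^2$ are eigenfunctions of $\Delta_{M_\beta}$ (with eigenvalues $\lambda$ and $\mu$), then $g\circ\pi$ and $(g\circ\pi)^2$ are eigenfunctions of $\Delta_M$ (with the same eigenvalues). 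It therefore suffices to find a simple restricted root $\beta$ of $M$ for which the rank-one totally geodesic subspace $M_\beta$ admits a Gudmundsson--Sobak eigenfunction $g\colon M_\beta\to\C$; we then set $f:=g\circ\pi$, which is non-constant since $\pi$ is a surjective retraction.

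To find such a $\beta$, I would decompose $M$ into irreducible factors. The hypothesis that $M$ is not a product of Cayley hyperbolic planes ensures that at least one irreducible factor $N$ of $M$ is not the Cayley hyperbolic plane. The main obstacle is then the following claim: every irreducible noncompact symmetric space other than the Cayley hyperbolic plane contains a simple restricted root $\beta$ whose $M_\beta$ is not the Cayley hyperbolic plane. I would verify this by a case-by-case inspection of Satake diagrams, using the description of $M_\beta$ in terms of the restricted-root multiplicities $m_\beta$ and $m_{2\beta}$: the Cayley plane arises as $M_\beta$ only for a very restricted list of exceptional configurations, so in every other irreducible $N$ at least one simple root yields a rank-one $M_\beta$ that is real, complex, or quaternionic hyperbolic.

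With such a $\beta$ selected, the rank-one space $M_\beta$ falls in the range treated by Ghandour--Gudmundsson \cite{MR4632822,MR4626314}, who explicitly construct a non-constant complex-valued function $g$ on each of the real, complex, and quaternionic hyperbolic spaces such that both $g$ and $g^2$ are eigenvectors of the Laplace--Beltrami operator. Composing with $\pi$ then yields the required function $f=g\circ\pi$ on $M$, completing the argument.
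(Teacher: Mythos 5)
Your proposal is correct and is essentially the paper's own argument: pick a simple restricted root $\beta$ whose rank-one subspace $M_\beta$ is a real, complex or quaternionic hyperbolic space, and pull back the Ghandour--Gudmundsson eigenfunctions through the harmonic Riemannian submersion $\pi$ of \cref{th:3} using Watson's intertwining of Laplacians. The only extra work you anticipate --- a Satake-diagram case check --- is unnecessary, since by Araki's classification (cited in the remarks after \cref{th:2}) the value $m_{2\beta}=7$ occurs only when $M=M_\beta$ is itself the Cayley hyperbolic plane, so for any factor that is not $\mathbb{O}H^2$ every simple restricted root already works.
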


Finally, for $r\in\Z^{+}$, pullback by our retraction
preserves the class of \emph{proper $r$-harmonic functions},
thus complex-valued functions in the kernel of the $r$-th
power of the Laplace--Beltrami operator but not that of the
$(r-1)$-th power.  Gudmundsson--Siffert--Sobak
\cite{MR4230531} find examples of these on rank-one
symmetric spaces of noncompact type and so we conclude:
\begin{theorem*}[\S\ref{sec:proper-r-harmonic}]
  Let $M$ be a Riemannian symmetric space of noncompact
  type.  Then there are proper $r$-harmonic functions
  $M\to\C$ for every $r\in\Z^+$.
\end{theorem*}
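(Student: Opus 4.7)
The plan is to reduce the statement to the already-known rank-one case via the harmonic Riemannian submersion supplied by \cref{th:3}. Fix any simple restricted root $\beta$, and let $\pi_{\beta}\colon M\to M_{\beta}$ be the harmonic Riemannian retraction of \cref{th:3} onto the rank-one totally geodesic subspace $M_{\beta}$.

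Because $\pi_{\beta}$ is a harmonic Riemannian submersion, the Watson identity \cite{Wat73} gives the intertwining
\[
\Delta_M(f\circ\pi_{\beta}) = (\Delta_{M_{\beta}} f)\circ\pi_{\beta}
\]
for every smooth $f\colon M_{\beta}\to\C$, and iterating yields $\Delta_M^{k}(f\circ\pi_{\beta}) = (\Delta_{M_{\beta}}^{k} f)\circ\pi_{\beta}$ for all $k\geq 0$.

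Now, by Gudmundsson--Siffert--Sobak \cite{MR4230531}, the rank-one Riemannian symmetric space $M_{\beta}$ of noncompact type admits, for each $r\in\Z^+$, a proper $r$-harmonic function $f_r\colon M_{\beta}\to\C$; that is, $\Delta_{M_{\beta}}^{r} f_r = 0$ while $\Delta_{M_{\beta}}^{r-1} f_r$ is not identically zero. Set $F_r := f_r\circ\pi_{\beta}$. The iterated intertwining immediately gives $\Delta_M^{r} F_r = 0$; and since $\pi_{\beta}$ is a retraction onto $M_{\beta}$, it is surjective, so $\Delta_M^{r-1} F_r = (\Delta_{M_{\beta}}^{r-1} f_r)\circ\pi_{\beta}$ is not identically zero on $M$ either. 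Therefore $F_r$ is proper $r$-harmonic on $M$, as required.

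The whole argument is essentially a formal consequence of \cref{th:3}, with no substantive obstacle beyond invoking the two external ingredients (Watson's intertwining and the rank-one construction of \cite{MR4230531}). The only things worth checking are that the intertwining persists under iteration (immediate from the formula) and that surjectivity of the retraction prevents the pullback from collapsing the nonvanishing of $\Delta_{M_{\beta}}^{r-1} f_r$ (automatic since $\pi_{\beta}\restr{M_{\beta}} = \id_{M_{\beta}}$).
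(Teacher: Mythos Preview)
Your proof is correct and follows exactly the same route as the paper: pull back the proper $r$-harmonic functions of Gudmundsson--Siffert--Sobak on the rank-one space $M_{\beta}$ along the harmonic Riemannian submersion of \cref{th:3}, using Watson's intertwining. Your explicit check that surjectivity of $\pi_{\beta}$ preserves the nonvanishing of $\Delta^{r-1}f_r$ makes precise what the paper leaves implicit in its item~(4) of \S\ref{sec:applications}.
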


\section{Preliminaries}
\label{sec:preliminaries}

\subsection{Structure theory}
\label{sec:structure-theory}

We begin by setting up the structure theory of noncompact
semisimple groups.  For details (and much more), we refer to
\cite[Chapter~VI]{Hel78}.

Let $M$ be a Riemannian symmetric space of noncompact type
with isometry group $G$.  Fix a base-point $o\in M$ with
stabiliser $K$ so that $K$ is a maximal compact subgroup of
$G$ and $M\cong G/K$.

Let $\g,\k$ be the Lie algebras of $G,K$ and
$\theta\in\Aut(\g)$ the Cartan involution of $\g$ with fixed
set $\k$.  We have the corresponding Cartan decomposition
\begin{equation*}
  \g=\k\oplus\p
\end{equation*}
into $\pm 1$-eigenspaces of $\theta$.

We define a $K$-invariant inner product $\ip{\cdot,\cdot}$
on $\g$ by
\begin{equation*}
  \ip{X,Y}=-B(X,\theta Y).
\end{equation*}
where $B$ is an $\Ad G$-invariant symmetric bilinear
form\footnote{The Killing form of $\g$ will do but if $\g$
  has more than one simple factor, there are many other
  possibilities.} on $\g$ which is positive-definite on $\p$
and negative-definite on $\k$.

Now fix a maximal abelian subspace $\a\leq\p$ (so that
$\dim\a=\rank M$) and let $\Sigma\subset\a^{*}$ be the
restricted roots (thus common eigenvalues of $\ad H$,
$H\in\a$) with restricted root spaces $\g_{\alpha}$,
$\alpha\in\Sigma$:
\begin{equation*}
  \g_{\alpha}=\set{X\in\g\st [H,X]=\alpha(H)X, \text{ for
      all $H\in\a$}}.
\end{equation*}
This gives an orthogonal decomposition
\begin{equation*}
  \g=\g_0\oplus\bigoplus_{\alpha\in\Sigma}\g_{\alpha}.
\end{equation*}
For $\alpha\in\Sigma$, the \emph{multiplicity} of
$\alpha$ is $m_{\alpha}:=\dim\g_{\alpha}$.

Fix a choice of positive restricted roots
$\Sigma^+\subset\Sigma$ and set
\begin{equation*}
  \n=\bigoplus_{\alpha\in\Sigma^+}\g_{\alpha}.
\end{equation*}
We then have the Iwasawa decomposition:
\begin{equation*}
  \g=\n\oplus\a\oplus\k.
\end{equation*}
Let $A,N$ denote the analytic subgroups of $G$ corresponding
to $\a,\n$.  Then multiplication gives a diffeomorphism
$N\times A\times K\to G$ yielding the global Iwasawa
decomposition $G=\NA K$.

\subsection{Simple restricted roots}
\label{sec:simple-restr-roots}

The datum of positive restricted roots $\Sigma^{+}$ leads to
the \emph{simple} restricted roots (roots $\beta\in\Sigma^+$
that cannot be written as a sum of two other positive
roots).  These comprise a basis of $\a^{*}$ and any positive
restricted root can be written uniquely as an $\N$-linear
combination of simple restricted roots.

Our constructions will start with a simple restricted root.
We collect some simple facts about these that we shall rely
on below:
\begin{lem}
  \label{th:1}
  Let $\beta\in\Sigma^+$ be a simple root and set
  $\Sigma^+_{\beta}=\Sigma^{+}\setminus\set{\beta,2\beta}$.
  Then
  \begin{compactenum}
  \item \label{item:1}Let $\n(\beta)\leq\n$ be given by
    \begin{equation*}
      \n(\beta)=\sum_{\alpha\in\Sigma^+_{\beta}}\g_{\alpha}.
    \end{equation*}
    Then $\n(\beta)$ is an ideal of $\n$.
  \item \label{item:2}We have:
    \begin{equation}
      \label{eq:1}
      \sum_{\alpha\in\Sigma^+_{\beta}}m_{\alpha}\ip{\alpha,\beta}=0.
    \end{equation}
  \item \label{item:3} If $m_{\beta}$ is odd then $m_{2\beta}=0$.
  \end{compactenum}
\end{lem}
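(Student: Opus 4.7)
For part (1), I would use that $[\g_\alpha,\g_\gamma]\sub\g_{\alpha+\gamma}$ (with the right-hand side zero if $\alpha+\gamma\notin\Sigma$), so it suffices to check that $\alpha+\gamma\notin\set{\beta,2\beta}$ whenever $\alpha\in\Sigma^+$ and $\gamma\in\Sigma^+_{\beta}$. The first case is excluded by simplicity of $\beta$. For the second, expanding $\alpha$ and $\gamma$ in the basis of simple roots forces all non-$\beta$ coefficients to vanish, so $\alpha$ and $\gamma$ are themselves positive root-multiples of $\beta$; the only such elements of $\Sigma^+$ are $\beta$ and $2\beta$, and $\alpha+\gamma=2\beta$ then forces $\alpha=\gamma=\beta$, contradicting $\gamma\notin\set{\beta,2\beta}$.

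For part (2), I would appeal to Weyl-group symmetry of the restricted root system. The reflection $s_\beta$ permutes $\Sigma^+_\beta$: any positive restricted root not proportional to $\beta$ has a strictly positive coefficient on some simple root other than $\beta$, and this coefficient is preserved by $s_\beta$. Moreover, multiplicities are Weyl-invariant, since the restricted Weyl group is realised by $\Ad(N_K(\a))$, which acts as $w$ on $\a^{*}$ and carries $\g_\alpha$ isomorphically to $\g_{w\alpha}$. Consequently the sum $\sum_{\alpha\in\Sigma^+_\beta} m_\alpha \alpha$ is $s_\beta$-fixed, hence orthogonal to $\beta$, yielding \eqref{eq:1}.

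Part (3) is the main work. Assume $m_{2\beta}\geq 1$, pick $0\neq Y\in\g_{-2\beta}$, and define an alternating bilinear form $\omega_Y\colon\g_\beta\times\g_\beta\to\R$ by $\omega_Y(X,X')=B(Y,[X,X'])$. By $\Ad$-invariance of $B$, $\omega_Y(X,X')=B([Y,X],X')$, and the $B$-pairing of $\g_{-\beta}$ with $\g_\beta$ is non-degenerate, so non-degeneracy of $\omega_Y$ is equivalent to injectivity of $\ad Y\colon\g_\beta\to\g_{-\beta}$. To prove the latter, I would form an $\fsl_2$-triple inside $\g$ with raising operator a positive multiple of $\theta Y\in\g_{2\beta}$, lowering operator a positive multiple of $Y$, and semisimple element $H\in\a$ normalised so that $(2\beta)(H)=2$, whence $\beta(H)=1$. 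Then $\g_\beta$ lies in the $H$-weight space of weight $1$, while $\ad(\theta Y)$ maps $\g_\beta$ into $\g_{3\beta}=0$ (using that the only positive multiples of $\beta$ in $\Sigma$ are $\beta$ and $2\beta$). So each $X\in\g_\beta$ is a highest-weight vector of weight $1$, and the elementary fact that the lowering operator of an $\fsl_2$-triple is injective on such vectors forces $\ad Y$ to be injective on $\g_\beta$. Hence $\omega_Y$ is non-degenerate, so $m_\beta=\dim\g_\beta$ is even; the contrapositive is (3).

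The main technical point is assembling the $\fsl_2$-argument in part (3). Once one is content to work with the $\fsl_2$-triple attached to $2\beta$ and notices that the vanishing $\g_{3\beta}=0$ makes $\g_\beta$ sit in the highest-weight layer, the injectivity of $\ad Y$ and hence the even-dimensionality of $\g_\beta$ are immediate. The other two parts are essentially bookkeeping with the restricted root combinatorics.
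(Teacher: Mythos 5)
Your proposal is correct. Parts (1) and (2) are essentially the paper's own argument (which follows Humphreys \S10.2): the key point in both is that membership in $\Sigma^+_{\beta}$ is detected by having a strictly positive coefficient on some simple root other than $\beta$, a property stable under adding positive roots (giving the ideal claim) and under the reflection $s_{\beta}$ (giving, with Weyl-invariance of multiplicities, the vanishing of $\sum_{\alpha\in\Sigma^+_{\beta}}m_{\alpha}\ip{\alpha,\beta}$). The real divergence is in part (3): the paper simply cites Araki's Proposition~2.3, whereas you supply a complete proof — the alternating form $\omega_Y(X,X')=B(Y,[X,X'])$ on $\g_{\beta}$, whose non-degeneracy is reduced via $\Ad$-invariance of $B$ to injectivity of $\ad Y\colon\g_{\beta}\to\g_{-\beta}$, which in turn follows from the $\fsl_2$-triple attached to $2\beta$ together with $\g_{3\beta}=0$ (so $\g_{\beta}$ consists of weight-$1$ highest-weight vectors, on which the lowering operator cannot vanish). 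This is the standard mechanism behind Araki's result and your reconstruction of it is sound; what it buys is a self-contained note at the cost of a page of $\fsl_2$ bookkeeping, while the citation keeps the lemma short. One small point worth making explicit if you write this up: $\theta Y\in\g_{2\beta}$ and $[\theta Y,Y]$ is a nonzero multiple of the coroot-like element $A_{2\beta}\in\a$ of the correct sign, so the triple can indeed be normalised as you claim.
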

\begin{proof}
  For the first two assertions, we argue as in
  \cite{Hum72}*{\S10.2}. The key observation is that any
  $\alpha$ lies in $\Sigma^+_{\beta}$ if and only if it has
  some strictly positive coefficient with respect to another
  simple root.  This property is unchanged when any positive
  restricted root or any multiple of $\beta$ is added.
  Since
  $[\g_{\alpha},\g_{\alpha'}]\leq \g_{\alpha+\alpha'}$, for
  any $\alpha,\alpha'\in\Sigma$, this settles item
  \ref{item:1}.  Moreover, it shows that the root reflection
  $\sigma_{\beta}$ must permute $\Sigma_{\beta}^+$,
  preserving multiplicities, and so must fix
  $\sum_{\alpha\in\Sigma_{\beta}^+}m_{\alpha}\alpha$.  Thus
  \eqref{eq:1} follows.

  Item \ref{item:3} is due to Araki
  \cite{MR153782}*{Proposition~2.3}.
\end{proof}
\subsection{Rank-one symmetric subspaces}
\label{sec:rank-one-symmetric}

Let $\beta\in\Sigma^+$ be a simple restricted root and
contemplate the $\theta$-stable Lie subalgebra
$\g^{\beta}\leq\g$ generated by $\g_{\pm\beta}$.  Define
subalgebras of $\g^{\beta}$ by
\begin{equation*}
  \n^{\beta}=\g_{\beta}\oplus\g_{2\beta},\qquad
  \k^{\beta}=\g^{\beta}\cap\k,\qquad
  \a^{\beta}=\R H_{\beta}
\end{equation*}
where $H_{\beta}\in\a$ is determined by
$\beta(H)=\ip{H_{\beta},H}$, for all $H\in\a$.  Further, let
$G^{\beta},K^{\beta},N^{\beta},A^{\beta}$ be the
corresponding analytic subgroups of $G$.  We have
\begin{prop}[\cite{Hel78}*{Chapter IX, \S2}]
  \label{th:2}
  $G^{\beta}$ is a semisimple Lie group with Iwasawa
  decomposition $N^{\beta}\!A^{\beta}K^{\beta}$.  Moreover,
  \begin{equation*}
    K^{\beta}=G^{\beta}\cap K
  \end{equation*}
  so that the symmetric space $M_{\beta}:=G^{\beta}/K^{\beta}$ embeds
  in $M=G/K$ totally geodesically as the $G^{\beta}$-orbit
  of $o$.
\end{prop}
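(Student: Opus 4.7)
The plan is to establish the structure at the Lie algebra level first, then transfer to the group.

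First, I would describe $\g^{\beta}$ explicitly. Since $\g^{\beta}$ is generated by $\g_{\pm\beta}$, all iterated brackets live in a sum of root spaces $\g_{k\beta}$; as restricted root systems contain no $\pm 3\beta$, this gives
\[
  \g^{\beta}=\g^{\beta}_{0}\oplus\g_{\beta}\oplus\g_{-\beta}\oplus\g_{2\beta}\oplus\g_{-2\beta},
\]
with $\g^{\beta}_{0}:=[\g_{\beta},\g_{-\beta}]+[\g_{2\beta},\g_{-2\beta}]\subseteq\g_{0}$ (using the standard fact that $[\g_{\beta},\g_{\beta}]=\g_{2\beta}$ when $2\beta\in\Sigma$). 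This decomposition is $\theta$-stable, yielding a Cartan decomposition $\g^{\beta}=\k^{\beta}\oplus\p^{\beta}$ with $\p^{\beta}:=\g^{\beta}\cap\p$. Semisimplicity of $\g^{\beta}$ follows from nondegeneracy of $B|_{\g^{\beta}}$, checked piece-by-piece: $B$ pairs $\g_{k\beta}$ with $\g_{-k\beta}$ nondegenerately, and nondegeneracy on $\g^{\beta}_{0}$ follows from the standard computation that the $\a$-part of $[X,Y]\in[\g_{\beta},\g_{-\beta}]$ is a nonzero multiple of $H_{\beta}$.

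Next I would identify the Iwasawa data. Since $H_{\beta}\in\a\cap\g^{\beta}$, we have $\a^{\beta}\subseteq\p^{\beta}$. Any element of $\p^{\beta}$ commuting with $H_{\beta}$ lies in $\ker(\ad H_{\beta})\cap\g^{\beta}=\g^{\beta}_{0}$, and the computation above shows $\g^{\beta}_{0}\cap\p=\a^{\beta}$. So $\a^{\beta}$ is maximal abelian in $\p^{\beta}$, and the restricted roots of $\g^{\beta}$ on $\a^{\beta}$ are the nonzero restrictions of $\pm\beta,\pm 2\beta$. Declaring $\beta|_{\a^{\beta}}$ positive recovers $\n^{\beta}=\g_{\beta}\oplus\g_{2\beta}$ as the positive nilradical, giving the Iwasawa decomposition $\g^{\beta}=\n^{\beta}\oplus\a^{\beta}\oplus\k^{\beta}$.

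To upgrade to the group, the standard Iwasawa theorem applied to the connected semisimple group $G^{\beta}$ (with its inherited Cartan involution) yields a global diffeomorphism $N^{\beta}\times A^{\beta}\times\widetilde{K}^{\beta}\to G^{\beta}$, where $\widetilde{K}^{\beta}$ is the analytic subgroup associated to $\k^{\beta}$. Being connected with Lie algebra in $\k$, $\widetilde{K}^{\beta}$ sits inside $K$; conversely, any $g\in G^{\beta}\cap K$ factors as $nak$ with $n\in N^{\beta}$, $a\in A^{\beta}$, $k\in\widetilde{K}^{\beta}\subseteq K$, and then $na\in K\cap NA=\set{e}$ by uniqueness of the ambient Iwasawa decomposition of $G$. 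This gives $K^{\beta}=G^{\beta}\cap K$. Finally, the orbit map realises $M_{\beta}=G^{\beta}/K^{\beta}$ as the $G^{\beta}$-orbit of $o$ in $M$, and $\p^{\beta}$ is a Lie triple system (since $\g^{\beta}$ is a subalgebra), so the embedding is totally geodesic by standard symmetric-space theory. The main subtlety is transferring the Iwasawa decomposition from the Lie algebra to the group for $G^{\beta}$: one may either invoke the general Iwasawa theorem for connected semisimple Lie groups applied to $G^{\beta}$, or redo its proof inside the ambient $G$ using the $\theta$-stability of $\g^{\beta}$.
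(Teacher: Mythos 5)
The paper offers no proof of this proposition: it is quoted verbatim from Helgason, Chapter~IX, \S2, so there is no internal argument to compare against. Your proof is essentially the standard one found there: decompose $\g^{\beta}$ into $\ad H_{\beta}$-eigenspaces using the absence of $\pm3\beta$, observe $\theta$-stability, identify $\a^{\beta}$ as maximal abelian in $\p^{\beta}=\g^{\beta}\cap\p$, and transfer to the group via the global Iwasawa theorem for the connected group $G^{\beta}$. Your reduction of $K^{\beta}=G^{\beta}\cap K$ to the uniqueness of the \emph{ambient} Iwasawa decomposition (via $NA\cap K=\set{e}$) is exactly the right move, and the Lie-triple-system argument for total geodesy of the orbit is standard and correct.

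One step is overstated. Nondegeneracy of $B|_{\g^{\beta}}$ does not by itself imply that $\g^{\beta}$ is semisimple: the restriction of the Killing form of $\fsl(2,\R)$ to a Cartan subalgebra is nondegenerate, yet that subalgebra is abelian. What $\theta$-stability (equivalently, nondegeneracy of the invariant form, or positive-definiteness of $\ip{\cdot,\cdot}$ on $\g^{\beta}$) buys you is that $\g^{\beta}$ is \emph{reductive}. To conclude semisimplicity you must still kill the centre; the quickest way is to note that $\g^{\beta}$ is perfect: the $\a$-computation you already invoke puts $H_{\beta}\in[\g_{\beta},\g_{-\beta}]\subseteq[\g^{\beta},\g^{\beta}]$, and since $\beta(H_{\beta})=\ip{\beta,\beta}\neq0$ we get $\g_{\pm\beta}=[\R H_{\beta},\g_{\pm\beta}]\subseteq[\g^{\beta},\g^{\beta}]$, so the derived algebra contains the generators and hence all of $\g^{\beta}$; a perfect reductive algebra is semisimple. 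With that one-line patch the argument is complete.
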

\begin{rems}
\item[]
  \begin{compactenum}
  \item $M_{\beta}$ is a rank-one symmetric space of
    dimension $1+m_{\beta}+m_{2\beta}$.  We can detect the
    isomorphism type of $M_{\beta}$ from $m_{2\beta}$:
    according to whether $m_{2\beta}=0,1$ or $3$,
    $M_{\beta}$ is homothetic to a real, complex or
    quaternionic hyperbolic space.  Exceptionally, one has
    $m_{2\beta}=7$ which only occurs when $M=M_{\beta}$ is
    the hyperbolic Cayley plane $\mathbb{O}H^2$.  See
    \cite{MR153782}*{\S5.11} for the compete list of simple
    restricted root multiplicities for each simple
    noncompact $\g$.
  \item In particular, when $m_{\beta}=1$ (which is always
    the case when $\g$ is the split real form of $\g^{\C}$),
    we have $m_{2\beta}=0$, by \cref{th:1}(\ref{item:3}),
    and $M_{\beta}$ is isometric to a hyperbolic plane.
  \item On the other hand, if $M$ is already rank-one, then
    $M_{\beta}=M$!
  \item Finally, we can understand the scaling of the metric
    on $M_{\beta}$: the minimum sectional curvature of
    $M_{\beta}$ is $-\ip{\beta,\beta}$, c.f.\
    \cite{Hel66}*{Theorem 1.1}.
  \end{compactenum}
\end{rems}

Our mission is to prove that there is a harmonic Riemannian
submersion $M\to M_{\beta}$ and it is to this that we now turn.

\section{Harmonic Riemannian submersions}
\label{sec:harm-riem-subm}

We identify $\NA$ with $M$ via $na\mapsto nao$.  The
Riemannian metric on $M$ induced by $B$ then pulls back to
the left invariant metric on $\NA$ induced by
$\ip{\cdot,\cdot}$ on $\s$.  In the same way, we have an
isometry $\NAb\cong M_{\beta}$.

Now let $\pi\colon \s\to\sb$ be orthogonal projection and remark
that
\begin{equation*}
  \ker\pi=\n(\beta)\oplus\ker\beta
\end{equation*}
is an ideal of $\s$ by \cref{th:1}(\ref{item:1}) so that
$\pi$ is a Lie algebra homomorphism.  Since $\NA$ is simply
connected, we integrate to get a Lie group homomorphism
$\NA\to\NAb$, also called $\pi$.

View $X\in\s$ or $\sb$ as a left-invariant vector field on
$\NA$ or $\NAb$ according to context.  Then
\begin{equation*}
  \d\pi(X)=\pi(X),
\end{equation*}
for $X\in\s$, so that $\pi\colon \NA\to\NAb$ is a Riemannian
submersion.

We want to show that $\pi$ is a harmonic map which means
that
\begin{equation*}
  \tau_{\pi}:=(\pi^{-1}\nabla^{\beta})_{e_k}\d\pi(e_k)-\d\pi(\nabla_{e_k} e_k)=0,
\end{equation*}
where $e_k$ is an orthonormal frame of $T\NA$ and
$\nabla,\nabla^{\beta}$ are the Levi-Civita connections of
$\NA,\NAb$.

To compute this, choose orthonormal bases $(e_{i})_{i}$ of
$\ker\pi$ and $(e_{j})_{j}$ of $\sb$ and combine them to get a
left-invariant orthonormal frame  of $\NA$.  Now, for
$X\in\sb$, 
\begin{equation*}
  \ip{\tau_{\pi},X}=\ip{\nabla_{e_{j}}e_j,X}-\ip{\nabla_{e_k}e_k,X}=-\ip{\nabla_{e_i}e_i,X}
\end{equation*}
which reads, in view of the Koszul formula,
\begin{equation*}
  \ip{e_i,[e_i,X]}=-\trace_{\ker\pi}\ad
  X.
\end{equation*}
Thus, $\pi$ is harmonic exactly when each $\ad X$, $X\in\sb$, is
trace-free on $\ker\pi$.  However, when $X\in\n^{\beta}$,
$\ad X$ is nilpotent and so certainly trace-free.  This leaves $\ad
H_{\beta}$ which has eigenvalue $\ip{\alpha,\beta}$ on any
$\g_{\alpha}$ and so
\begin{equation*}
  \trace_{\ker\pi}\ad H_{\beta}=\sum_{\alpha\in\Sigma^+_{\beta}}m_{\alpha}\ip{\alpha,\beta}=0
\end{equation*}
by \eqref{eq:1}.

To summarise:
\begin{thm}
  \label{th:3}
  $\pi\colon NA\to\NAb$ is a harmonic Riemannian submersion.
\end{thm}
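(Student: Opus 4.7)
The plan is to construct $\pi$ at the Lie-algebra level, integrate it to a group homomorphism, identify it as a Riemannian submersion of the left-invariant metrics on $\NA$ and $\NAb$, and finally reduce harmonicity to a trace identity that is exactly \cref{th:1}(\ref{item:2}).

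First I would verify that $\ker\pi=\n(\beta)\oplus\ker\beta$ is an ideal of $\s$: the first summand is an ideal of $\n$ by \cref{th:1}(\ref{item:1}), while $\ker\beta\subseteq\a$ commutes with $\sb$ since $\a$ is abelian and $\ker\beta$ annihilates $\n^{\beta}=\g_{\beta}\oplus\g_{2\beta}$. Hence $\pi\colon\s\to\sb$ is a Lie algebra homomorphism, and since the solvable group $\NA$ is simply connected I can integrate it to a Lie group homomorphism $\pi\colon\NA\to\NAb$. Regarding elements of $\s$ and $\sb$ as left-invariant vector fields, the identity $\d\pi(X)=\pi(X)$ together with the fact that $\pi|_{\s}$ is an orthogonal projection shows $\pi$ is a Riemannian submersion for the left-invariant metrics, which via $\NA\cong M$ and $\NAb\cong M_{\beta}$ are the symmetric-space metrics.

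The main step is computing the tension field. I would assemble a left-invariant orthonormal frame of $T\NA$ from orthonormal bases $(e_{i})$ of $\ker\pi$ and $(e_{j})$ of $\sb$. Since $(e_{j})$ is $\pi$-related to a left-invariant orthonormal frame of $\NAb$, O'Neill's identity $d\pi((\nabla_{e_j}e_j)^{H})=\nabla^{\beta}_{e_j}e_j$ makes the horizontal contribution to the standard tension-field formula cancel, leaving
\begin{equation*}
  \ip{\tau_{\pi},X}=-\ip{\nabla_{e_{i}}e_{i},X}
\end{equation*}
for any $X\in\sb$. Applying the Koszul formula on the left-invariant frame (all function terms vanish on constant inner products) then gives
\begin{equation*}
  \ip{\tau_{\pi},X}=\ip{e_{i},[e_{i},X]}=-\trace_{\ker\pi}\ad X,
\end{equation*}
where in the last step I use the ideal property so that $[e_{i},X]\in\ker\pi$.

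Thus $\pi$ is harmonic precisely when $\ad X$ is trace-free on $\ker\pi$ for every $X\in\sb=\n^{\beta}\oplus\R H_{\beta}$. For $X\in\n^{\beta}$ this is automatic because $\ad X$ strictly raises the $\a$-weight and is therefore nilpotent on $\g$. The remaining case $X=H_{\beta}$ is the crux: $\ad H_{\beta}$ acts on $\g_{\alpha}$ by the scalar $\ip{\alpha,\beta}$ and annihilates $\a\supseteq\ker\beta$, so
\begin{equation*}
  \trace_{\ker\pi}\ad H_{\beta}=\sum_{\alpha\in\Sigma^{+}_{\beta}}m_{\alpha}\ip{\alpha,\beta},
\end{equation*}
which vanishes by \cref{th:1}(\ref{item:2}). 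This Weyl-invariance identity is the only substantive input; everything else is routine Lie-theoretic bookkeeping, so it is the real obstacle—and it has already been cleared.
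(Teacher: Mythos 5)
Your proposal is correct and follows essentially the same route as the paper: integrate the orthogonal projection $\pi\colon\s\to\sb$ (a Lie algebra homomorphism since $\ker\pi$ is an ideal, via \cref{th:1}(\ref{item:1})) to a Riemannian submersion of the left-invariant metrics, reduce the tension field via the Koszul formula to $-\trace_{\ker\pi}\ad X$ for $X\in\sb$, dispose of $X\in\n^{\beta}$ by nilpotency, and kill $\trace_{\ker\pi}\ad H_{\beta}$ with the identity \eqref{eq:1} from \cref{th:1}(\ref{item:2}). The only differences are cosmetic: you spell out the ideal check for the $\ker\beta$ summand and invoke O'Neill's identity where the paper silently uses that the Koszul data of $\NA$ and $\NAb$ agree on $\sb$.
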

\begin{rems}
\item{}
  \begin{compactenum}
  \item Since $\pi\colon\s\to\sb$ is a projection,
    $\pi\colon\NA\to\NAb$ is a retraction.
  \item $\pi\colon\NA\to\NAb$ is topologically trivial:
    multiplication $m\colon\ker\pi\times\NAb\to\NA$ is a
    diffeomorphism and $\pi=\pi_1\circ m^{-1}$.
  \item According to Watson \cite{Wat73}*{Theorem~2.1}, the
    fibres of $\pi$ are minimal.  However, they are not
    totally geodesic in general.  Indeed, if $M$ is
    irreducible of rank at least two, there is a restricted
    root $\alpha\in\Sigma^+_{\beta}$ with
    $\ip{\alpha,\beta}\neq 0$.  Then, for
    $X\in\g_{\alpha}\leq\n(\beta)$ non-zero, the Koszul
    formula gives:
    \begin{equation*}
      \ip{\nabla_XX,H_{\beta}}=\ip{\alpha,\beta}\ip{X,X}\neq 0.
    \end{equation*}
  \end{compactenum}
\end{rems}

\section{Applications}
\label{sec:applications}

It is a result of Watson \cite{Wat73}*{Theorem~2.1} that a
surjection $\pi\colon M\to B$ of Riemannian manifolds is a
harmonic Riemannian submersion if and only if it intertwines
the Laplace--Beltrami operators of $M$ and $B$:
\begin{equation*}
  (\Delta^Bf)\circ\pi=\Delta^M(f\circ\pi),
\end{equation*}
for all $f\in\ci{B}$.

In particular, harmonic Riemannian submersions enjoy the
following properties:
\begin{compactenum}
\item they are harmonic morphisms: that is, they pull back
  (germs of) harmonic functions to harmonic functions;
\item more generally, they pullback eigenvectors of
  $\Delta^B$ to eigenvectors (with the same eigenvalue) of
  $\Delta^M$;
\item as a result, they pullback eigenfunctions in the sense
  of Gudmundsson--Sobak \cite{GudSob20}*{Definition~2.3}:
  these are eigenvectors $f$ of the Laplace--Beltrami
  operator for which $f^2$ is also an eigenvector;
\item they pullback proper $r$-harmonic functions: these are
  functions $f$ such that $\Delta^rf=0$ while
  $\Delta^{r-1}f\neq 0$.
\end{compactenum}

We now use \cref{th:3} to provide examples of complex-valued
functions of these various types on all (or nearly all)
Riemannian symmetric spaces of noncompact type.

\subsection{Harmonic morphisms}
\label{sec:harmonic-morphisms}

We have:
\begin{cor}
  \label{th:4}
  Let $M$ be a Riemannian symmetric space of noncompact type
  which admits a simple restricted root $\beta$ with $m_{\beta}=1$.
  Then there is a non-constant harmonic morphism $\phi\colon M\to\C$.
\end{cor}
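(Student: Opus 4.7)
My plan is to factor the sought-after harmonic morphism through the retraction $\pi$ of \cref{th:3}.  The first step is to observe that $m_{\beta}=1$ is odd, so \cref{th:1}(\ref{item:3}) forces $m_{2\beta}=0$; then by the remarks following \cref{th:2}, $M_{\beta}$ has dimension $1+m_{\beta}+m_{2\beta}=2$ and is homothetic to the real hyperbolic plane $\R H^{2}$.

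Once this reduction is in place, I would appeal to the fact that $M_{\beta}$, being a $2$-dimensional simply connected Riemannian manifold of negative curvature, carries a canonical complex structure compatible with its conformal class and is biholomorphic to the open unit disk by uniformisation.  In particular there exist globally defined, non-constant holomorphic maps $h\colon M_{\beta}\to\C$ --- for instance a global conformal chart --- and any such $h$ is automatically a non-constant harmonic morphism.

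Finally, \cref{th:3} tells us that $\pi\colon M\to M_{\beta}$ is a harmonic Riemannian submersion, so by Watson's intertwining theorem (cited at the start of \S\ref{sec:applications}) it pulls back harmonic functions to harmonic functions and is therefore itself a harmonic morphism.  Composing, I would set $\phi:=h\circ\pi\colon M\to\C$, which is a harmonic morphism because harmonic morphisms compose and is non-constant because $h$ is non-constant and $\pi$ is a surjective retraction.  There is no real obstacle in this argument: the whole thing is an immediate chain of consequences of \cref{th:3} together with the classical uniformisation of $\R H^{2}$, and the only point requiring any care at all is ensuring that a non-constant $h$ is genuinely available globally, which uniformisation supplies at once.
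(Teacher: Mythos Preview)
Your argument is correct and follows essentially the same route as the paper: identify $M_{\beta}$ with the hyperbolic plane (via $m_{2\beta}=0$ and the remarks after \cref{th:2}), take a non-constant holomorphic function on it as a harmonic morphism, and compose with the harmonic Riemannian submersion $\pi$ of \cref{th:3}. The only cosmetic difference is that the paper simply invokes the identification of $H^{2}$ with the upper half-plane rather than appealing to uniformisation.
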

\begin{proof}
  In this case, as we have remarked above, $M_{\beta}$ is a
  hyperbolic plane $H^2$.  Let $f\colon M_{\beta}\to\C$ be
  any holomorphic function (for example, the identification
  of $H^2$ with the upper half-plane).  Then $f$ is a
  harmonic morphism \cite{MR2044031}*{Example~4.2.7} and
  harmonic morphisms are clearly closed under composition so
  that $\phi:=f\circ\pi$ is our desired harmonic morphism.
\end{proof}

By contrast, Gudmundsson--Svensson \cite{MR2525933}*{Example
12.2} prove the existence of a non-constant harmonic
morphism $M\to\C$ when $M$ has a simple restricted root of
multiplicity at least $2$.  Thus, taken together with
\cref{th:4}, we have the following theorem which has long
been conjectured by Gudmundsson (c.f.\
\cite{Sve04}*{Conjecture~7.1.1}):
\begin{thm}
  \label{th:5}
  Let $M$ be a Riemannian symmetric space of noncompact
  type.  Then there is a non-constant harmonic morphism
  $\phi\colon M\to\C$.
\end{thm}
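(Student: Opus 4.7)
The proof is essentially a case split that combines the two preceding results and requires no new computation. The plan is as follows.

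Fix any Riemannian symmetric space $M$ of noncompact type. Since the Lie algebra $\g$ is noncompact semisimple, the real rank $\dim\a$ is at least $1$, and so the set of simple restricted roots is non-empty. Choose any simple restricted root $\beta\in\Sigma^+$; its multiplicity $m_{\beta}$ is a positive integer, so exactly one of the following two cases occurs.

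If $m_{\beta}=1$, then I would invoke \cref{th:4} directly: the rank-one totally geodesic submanifold $M_{\beta}$ is a hyperbolic plane (by \cref{th:1}\eqref{item:3} we automatically have $m_{2\beta}=0$), and composing the harmonic retraction $\pi\colon M\to M_{\beta}$ of \cref{th:3} with any non-constant holomorphic identification $M_{\beta}\cong H^2\to\C$ produces the required globally defined, non-constant complex-valued harmonic morphism.

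If instead $m_{\beta}\geq 2$, then I would quote the result of Gudmundsson--Svensson \cite{MR2525933}*{Example 12.2} cited just above the theorem, which produces a non-constant harmonic morphism $M\to\C$ directly from the existence of a simple restricted root of multiplicity at least two. Together these two cases exhaust all possibilities, proving the theorem. There is no genuine obstacle here: the content of \cref{th:5} sits entirely in \cref{th:4} (whose novelty is the harmonic submersion $\pi$ of \cref{th:3}) and in the complementary rank-one-factor construction of Gudmundsson--Svensson; the theorem itself is simply the observation that these two ranges of multiplicities cover every case.
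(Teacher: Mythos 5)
Your proposal is correct and matches the paper's own argument: the paper likewise deduces \cref{th:5} by combining \cref{th:4} (the multiplicity-one case, via the harmonic submersion of \cref{th:3}) with Gudmundsson--Svensson's result for a simple restricted root of multiplicity at least two. The only cosmetic difference is that you phrase the dichotomy by fixing one simple root and splitting on its multiplicity, which is equally valid.
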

\goodbreak

\begin{commentary}
\item[]
  \begin{compactenum}
  \item There is a lot of prior art for \cref{th:5}.  In
    view of the multiplicity two result of
    Gudmundsson--Svensson, one only needs to consider the
    case where all restricted simple roots have multiplicity
    one.  This occurs exactly when $\g$ is the split real
    form of $\g^{\C}$, or, equivalently, $\rank M=\rank G$,
    so that $\a^{\C}$ is a Cartan subalgebra of $\g^{\C}$.
    However, another result of Gudmundsson--Svensson
    \cite{MR2525933}*{Theorem~11.3} also provides harmonic
    morphisms when $\rank M\geq3$ which only leaves
    $\rSL(3,\R),\rSO(2,3)\cong\rSp(4,\R),\rSO(2,2),G_2^{2}$.
    For these, Gudmundsson--Svensson construct
    complex-valued harmonic morphisms when $G=\rSL(n,\R)$
    \cite{MR2271193}*{Theorem~4.1} while the other two
    classical symmetric spaces are Hermitian symmetric and
    so bounded symmetric domains with plenty of holomorphic
    functions.  In short, the only irreducible $M$ for which
    \cref{th:5} is new is $G^2_2/\rSO(4)$ which has resisted
    all previous approaches to the problem!
  \item It is not difficult to work through the
    identifications to produce explicit formula for the
    harmonic morphisms we have been discussing.  First, when
    $m_{\beta}=1$, $\phi\colon M\to\C$ can be taken to be
    given by
    \begin{equation}\label{eq:2}
      \phi(nao)=\ip{X,\log_Nn}+ie^{\beta(\log_Aa)},
    \end{equation}
    where $X\in\g_{\beta}$ with $\ip{X,X}=\ip{\beta,\beta}$.
    Here $\log_N\colon N\to\n$, $\log_A\colon A\to\a$ invert
    the exponential map on $N$, $A$.

    A similar formula is available when $m_{\beta}\geq2$: in
    this case, one can find $X\in\g_{\beta}^{\C}$ with
    $\ip{X,X}=0$ and then take
    \begin{equation*}
      \phi(nao)=\ip{X,\log_Nn}.
    \end{equation*}
    This reproduces the maps found in \cite{MR2525933}.  It
    is an amusing exercise to verify directly that these
    maps are harmonic morphisms.
  \item There is an alternative approach to \cref{th:4}
    via harmonic analysis on $M$.  When $m_{\beta}=1$, one
    can find a finite-dimensional subrepresentation $V\leq\ci{M}$ of the
    regular representation of $G$ on $\ci{M}$ with the
    following properties: for $v\in V$ the highest weight
    vector with respect to a suitably chosen Borel
    subalgebra of $\g^{\C}$ and $Y\in\g_{-\beta}$, setting
    $u=Yv$ and $\vh=Yu$, we can arrange that both $v$ and
    $2v\vh-u^2$ are strictly positive functions while
    $Y\vh=0$.  Then
    \begin{equation*}
      \phi:=\frac{-u+i\sqrt{2v\vh-u^2}}{v}
    \end{equation*}
    yields a harmonic morphism onto the upper half-plane
    which coincides with \eqref{eq:2} up to scale.  This is an
    abstraction of the argument used in \cite{MR2271193} to
    treat $\rSL(n,\R)/\rSO(n)$.
  \end{compactenum}
\end{commentary}

\subsection{Eigenfunctions}
\label{sec:eigenfunctions}

Ghandour--Gudmundsson
\citelist{\cite{MR4632822}*{Theorems~1.14,
    1.19}\cite{MR4626314}*{Theorem~4.3}} construct
eigenfunctions in the sense of Gudmundsson--Sobak on real,
complex and quaternionic Grassmannians and their non-compact
duals.  In particular, they find eigenfunctions on the real,
complex and quaternionic hyperbolic spaces.  We therefore
conclude:
\begin{thm}
  \label{th:6}
  Let $M$ be a Riemannian symmetric space of noncompact type
  which is not a product of Cayley hyperbolic planes.  Then
  there exist $f\colon M\to\C$ such that both $f$ and $f^2$
  are eigenvectors of $\Delta^M$.
\end{thm}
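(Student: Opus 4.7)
The plan is to pull back a Gudmundsson--Sobak eigenfunction from a carefully chosen rank-one subspace via the harmonic Riemannian submersion of \cref{th:3}.

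First I would pick a simple restricted root $\beta$ for which $M_\beta$ is \emph{not} a Cayley hyperbolic plane. By the classification recalled in the remarks following \cref{th:2}, $M_\beta\cong\mathbb{O}H^2$ occurs only when $M_\beta$ exhausts an entire irreducible de~Rham factor of $M$, and that factor is then itself $\mathbb{O}H^2$; indeed $\g^\beta$ is generated by $\g_{\pm\beta}$, which lie in a single simple ideal of $\g$, so $M_\beta$ is always contained in a single de~Rham factor. Since $M$ is, by hypothesis, not a product of Cayley hyperbolic planes, at least one irreducible factor $M_0$ satisfies $M_0\not\cong\mathbb{O}H^2$; any simple restricted root $\beta$ of $M_0$ then yields an $M_\beta$ isometric to a real, complex, or quaternionic hyperbolic space.

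Next, I would invoke Ghandour--Gudmundsson \cite{MR4632822,MR4626314} to obtain $g\colon M_\beta\to\C$ such that both $g$ and $g^2$ are eigenvectors of $\Delta^{M_\beta}$. Setting $f:=g\circ\pi$, where $\pi\colon M\to M_\beta$ is the harmonic Riemannian submersion of \cref{th:3}, Watson's intertwining formula gives
\begin{equation*}
  \Delta^M f = (\Delta^{M_\beta}g)\circ\pi \quad\text{and}\quad \Delta^M f^2 = (\Delta^{M_\beta}g^2)\circ\pi,
\end{equation*}
so $f$ and $f^2$ are eigenvectors of $\Delta^M$, with the same eigenvalues as $g$ and $g^2$ respectively, and $f$ is non-constant since $g$ is and $\pi$ is surjective.

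There is no substantive obstacle: the result is essentially a packaging of \cref{th:3} with the Ghandour--Gudmundsson rank-one construction. The only step requiring care is the selection of $\beta$, where the precise form of the hypothesis on $M$ is dictated by the observation that $\mathbb{O}H^2$ arises as an $M_\beta$ exactly when some de~Rham factor of $M$ is itself $\mathbb{O}H^2$---an immediate consequence of the multiplicity classification.
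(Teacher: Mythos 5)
Your proposal is correct and follows essentially the same route as the paper: choose a simple restricted root $\beta$ with $M_\beta$ a real, complex or quaternionic hyperbolic space (possible precisely because $M$ is not a product of Cayley hyperbolic planes, since $M_\beta\cong\mathbb{O}H^2$ forces the relevant irreducible factor to be $\mathbb{O}H^2$ itself), then pull back the Ghandour--Gudmundsson eigenfunctions through the harmonic Riemannian submersion of \cref{th:3} using Watson's intertwining of the Laplacians. Your extra care in justifying the existence of such a $\beta$ via the de~Rham factors is a welcome elaboration of a point the paper leaves implicit, but it is not a different argument.
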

\begin{proof}
  For such an $M$, there is at least one $M_{\beta}$
  isometric to a real, complex or quaternionic hyperbolic
  space.  Apply \cref{th:3} to pullback the examples of
  Ghandour--Gudmundsson on $M_{\beta}$ to $M$.
\end{proof}

\begin{rem}
  In fact, one can do better than this and construct many
  such eigenfunctions on \emph{any} Riemannian symmetric
  space of semisimple type.  We shall return to this topic
  elsewhere.
\end{rem}

\subsection{Proper $r$-harmonic functions}
\label{sec:proper-r-harmonic}

Gudmundsson--Siffert--Sobak \cite{MR4230531}*{Theorems~4.4,
  4.8} construct proper $r$-harmonic functions on any
rank-one Riemannian symmetric space of non-compact type and
so any $M_{\beta}$.  Thus \cref{th:3} yields
\begin{thm}
  \label{th:7}
  Let $M$ be a Riemannian symmetric space of noncompact
  type.  Then there are proper $r$-harmonic functions
  $M\to\C$, for any $r\in\Z^{+}$.
\end{thm}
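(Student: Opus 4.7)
The plan is to combine \cref{th:3} with Watson's intertwining identity $(\Delta^B g)\circ\pi=\Delta^M(g\circ\pi)$ for harmonic Riemannian submersions and reduce the problem to the rank-one case, where Gudmundsson--Siffert--Sobak have already done all the work.

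First, I would pick any simple restricted root $\beta\in\Sigma^+$ (at least one exists by the structure theory of \S\ref{sec:preliminaries}). \cref{th:2} then provides the rank-one totally geodesic subspace $M_\beta\subset M$, and \cref{th:3} provides a harmonic Riemannian submersion (indeed, retraction) $\pi\colon M\to M_\beta$. Next, I would invoke Gudmundsson--Siffert--Sobak \cite{MR4230531} to obtain a proper $r$-harmonic function $f\colon M_\beta\to\C$, so $(\Delta^{M_\beta})^{r}f=0$ while $(\Delta^{M_\beta})^{r-1}f\not\equiv 0$.

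Setting $\phi:=f\circ\pi$, a one-line induction on Watson's identity gives
\begin{equation*}
  (\Delta^{M})^{k}\phi=\bigl((\Delta^{M_\beta})^{k}f\bigr)\circ\pi
\end{equation*}
for every $k\geq 0$. Taking $k=r$ shows that $\phi$ lies in the kernel of $(\Delta^M)^r$; taking $k=r-1$ shows that $(\Delta^M)^{r-1}\phi$ does not vanish identically, because $\pi$ is surjective (being a retraction) and hence pullback by $\pi$ is injective on functions.

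There is no real obstacle: once \cref{th:3} and Watson's intertwining are in hand, $r$-harmonicity transfers formally, and the only point to verify, that pullback by a surjection sends a non-zero function to a non-zero function, is immediate. The argument is entirely parallel to the proof of \cref{th:6}, with the input of Ghandour--Gudmundsson replaced by that of Gudmundsson--Siffert--Sobak.
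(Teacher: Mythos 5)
Your proposal is correct and follows exactly the paper's route: the paper also picks a simple restricted root, applies \cref{th:3} to get the harmonic Riemannian submersion $\pi\colon M\to M_\beta$, and pulls back the proper $r$-harmonic functions of Gudmundsson--Siffert--Sobak, with the preservation of proper $r$-harmonicity under pullback (via Watson's intertwining identity) noted in \S\ref{sec:applications}. The only difference is that you spell out the induction and the injectivity of pullback by a surjection, which the paper leaves implicit.
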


\begin{bibdiv}
  \begin{biblist}
 \bib{MR153782}{article}{
  author={Araki, Sh\^{o}r\^{o}},
  title={On root systems and an infinitesimal classification of irreducible symmetric spaces},
  journal={J. Math. Osaka City Univ.},
  volume={13},
  date={1962},
  pages={1--34},
  issn={0449-2773},
  review={\MR {153782}},
  url={https://projecteuclid.org/journals/journal-of-mathematics-osaka-city-university/volume-13/issue-1/On-root-systems-and-an-infinitesimal-classification-of-irreducible-symmetric/ojm/1353055009.full?tab=ArticleLink},
}

\bib{MR2044031}{book}{
  author={Baird, Paul},
  author={Wood, John C.},
  title={Harmonic morphisms between Riemannian manifolds},
  series={London Mathematical Society Monographs. New Series},
  volume={29},
  publisher={The Clarendon Press, Oxford University Press, Oxford},
  date={2003},
  pages={xvi+520},
  isbn={0-19-850362-8},
  review={\MR {2044031}},
  doi={10.1093/acprof:oso/9780198503620.001.0001},
}

\bib{MR4626314}{article}{
  author={Ghandour, Elsa},
  author={Gudmundsson, Sigmundur},
  title={Explicit $p$-harmonic functions on the real Grassmannians},
  journal={Adv. Geom.},
  volume={23},
  date={2023},
  number={3},
  pages={315--321},
  issn={1615-715X},
  review={\MR {4626314}},
  doi={10.1515/advgeom-2023-0015},
}

\bib{MR4632822}{article}{
  author={Ghandour, Elsa},
  author={Gudmundsson, Sigmundur},
  title={Explicit harmonic morphisms and $p$-harmonic functions from the complex and quaternionic Grassmannians},
  journal={Ann. Global Anal. Geom.},
  volume={64},
  date={2023},
  number={2},
  pages={Paper No. 15, 18},
  issn={0232-704X},
  review={\MR {4632822}},
  doi={10.1007/s10455-023-09919-8},
}

\bib{MR2395191}{article}{
  author={Gudmundsson, Sigmundur},
  author={Sakovich, Anna},
  title={Harmonic morphisms from the classical compact semisimple Lie groups},
  journal={Ann. Global Anal. Geom.},
  volume={33},
  date={2008},
  number={4},
  pages={343--356},
  issn={0232-704X},
  review={\MR {2395191}},
  doi={10.1007/s10455-007-9090-8},
}

\bib{MR4230531}{article}{
  author={Gudmundsson, Sigmundur},
  author={Siffert, Anna},
  author={Sobak, Marko},
  title={Explicit $p$-harmonic functions on rank-one Lie groups of Iwasawa type},
  journal={J. Geom. Phys.},
  volume={164},
  date={2021},
  pages={Paper No. 104205, 16},
  issn={0393-0440},
  review={\MR {4230531}},
  doi={10.1016/j.geomphys.2021.104205},
}

\bib{MR4382667}{article}{
  author={Gudmundsson, Sigmundur},
  author={Siffert, Anna},
  author={Sobak, Marko},
  title={Explicit proper $p$-harmonic functions on the Riemannian symmetric spaces $\bold {SU}(n)/\bold {SO} (n)$, $\bold {Sp}(n)/\bold {U}(n)$, $\bold {SO} (2n)/\bold {U}(n)$, $\bold {SU}(2n)/\bold {Sp}(n)$},
  journal={J. Geom. Anal.},
  volume={32},
  date={2022},
  number={5},
  pages={Paper No. 147, 16},
  issn={1050-6926},
  review={\MR {4382667}},
  doi={10.1007/s12220-022-00885-4},
}

\bib{GudSob20}{article}{
  author={Gudmundsson, Sigmundur},
  author={Sobak, Marko},
  title={Proper $r$-harmonic functions from Riemannian manifolds},
  journal={Ann. Global Anal. Geom.},
  volume={57},
  date={2020},
  number={1},
  pages={217--223},
  issn={0232-704X},
  review={\MR {4057458}},
  doi={10.1007/s10455-019-09696-3},
}

\bib{MR2271193}{article}{
  author={Gudmundsson, Sigmundur},
  author={Svensson, Martin},
  title={On the existence of harmonic morphisms from certain symmetric spaces},
  journal={J. Geom. Phys.},
  volume={57},
  date={2007},
  number={2},
  pages={353--366},
  issn={0393-0440},
  review={\MR {2271193}},
  doi={10.1016/j.geomphys.2006.03.008},
}

\bib{MR2525933}{article}{
  author={Gudmundsson, Sigmundur},
  author={Svensson, Martin},
  title={Harmonic morphisms from solvable Lie groups},
  journal={Math. Proc. Cambridge Philos. Soc.},
  volume={147},
  date={2009},
  number={2},
  pages={389--408},
  issn={0305-0041},
  review={\MR {2525933}},
  doi={10.1017/S0305004109002564},
}

\bib{Hel66}{article}{
  author={Helgason, Sigurdur},
  title={Totally geodesic spheres in compact symmetric spaces},
  journal={Math. Ann.},
  volume={165},
  date={1966},
  pages={309--317},
  issn={0025-5831},
  review={\MR {210043}},
  doi={10.1007/BF01344015},
}

\bib{Hel78}{book}{
  author={Helgason, Sigurdur},
  title={Differential geometry, {L}ie groups, and symmetric spaces},
  series={Pure and Applied Mathematics},
  publisher={Academic Press Inc. [Harcourt Brace Jovanovich Publishers]},
  address={New York},
  date={1978},
  volume={80},
  isbn={0-12-338460-5},
  review={\MR {MR514561 (80k:53081)}},
}

\bib{Hum72}{book}{
  author={Humphreys, James~E.},
  title={Introduction to {L}ie algebras and representation theory},
  publisher={Springer-Verlag},
  address={New York},
  date={1972},
  note={Graduate Texts in Mathematics, Vol. 9},
  review={\MR {MR0323842 (48 \#2197)}},
}

\bib{Sve04}{unpublished}{
  author={Svensson, Martin},
  title={Harmonic morphisms, Hermitian structures and symmetric spaces},
  date={2004},
  note={Lund Ph.D. thesis},
}

\bib{Wat73}{article}{
  author={Watson, B.},
  title={Manifolds maps commuting with the {L}aplacian},
  date={1973},
  journal={J. Differential Geom.},
  volume={8},
  pages={85\ndash 94},
}
  \end{biblist}
\end{bibdiv}


\end{document}